\def\l{\left}
\def\r{\right}
\def\bg{\bigg}
\def\({\bg(}
\def\){\bg)}
\def\f{\frac}
\def\eq{\equiv}
\def\Z{\mathbb Z}
\def\C{\mathbb C}
\def\N{\mathbb N}
\def\R{\mathbb R}
\def\<{\langle}
\def\>{\rangle}
\def\1{{\bf 1}}
\theoremstyle{plain}
\newtheorem{theorem}{Theorem}[section]
\newtheorem{lemma}{Lemma}[section]
\newtheorem{conjecture}{Conjecture}[section]
\theoremstyle{definition}
\newtheorem*{Acks}{Acknowledgments}
\theoremstyle{remark}
\newtheorem{remark}{Remark}
\numberwithin{equation}{section}
\begin{document}
\hbox{}
\medskip

\title[Some congruences from the Karlsson-Minton summation formula]{Some congruences from the Karlsson-Minton\\ summation formula}

\author{Junhang Li}
\address[Junhang Li]{Department of Applied Mathematics, Nanjing Forestry
University, Nanjing 210037, People's Republic of China}
\email{745679122@qq.com}

\author{Yezhenyang Tang}
\address[Yezhenyang Tang]{Department of Applied Mathematics, Nanjing Forestry
University, Nanjing 210037, People's Republic of China}
\email{2742425229@qq.com}

\author{Chen Wang*}
\address[Chen Wang]{Department of Applied Mathematics, Nanjing Forestry
University, Nanjing 210037, People's Republic of China}
\email{cwang@smail.nju.edu.cn}

\thanks{*Chen Wang is the corresponding author}
\keywords{truncated hypergeometric series, supercongruences, Karlsson-Minton summation formula}
\subjclass[2010]{Primary 33C20; Secondary 05A10, 11B65, 11A07, 33E50}

\begin{abstract}
Let $p$ be an odd prime. In this paper, by using the well-known Karlsson-Minton summation formula, we mainly prove two supercongruences as variants of a supercongruence of Deines-Fuselier-Long-Swisher-Tu, which confirm some recent conjectures of V.J.W. Guo.
\end{abstract}
\maketitle

\section{Introduction}

For any $n\in\N=\{0,1,2,\ldots\}$, let $(x)_n=x(x+1)\cdots(x+n-1)$ denote the Pochhammer symbol. For $n,r\in\N$ and $a_0,\ldots,a_r,b_1,\ldots,b_r,z\in\C$ with $(b_1)_n,\ldots,(b_r)_n$ being nonzero, the truncated hypergeometric series ${}_{r+1}F_r$ are defined as
$$
{}_{r+1}F_r\bigg[\begin{matrix}a_0,&a_1,&\ldots,&a_r\\ &b_1,&\ldots,&b_r\end{matrix}\bigg|\ z\bigg]_n=\sum_{k=0}^{n}\f{(a_0)_k\cdots(a_r)_k}{(b_1)_k\cdots(b_r)_k}\cdot\f{z^k}{k!}.
$$
Clearly, they are partial sums of the classical hypergeometric series. Let $p$ be an odd prime. For any integer $n\geq1$, the $p$-adic Gamma function introduced by Morita (cf. \cite{Morita,R}) is defined as
$$
\Gamma_p(n)=(-1)^n\prod_{\substack{1\leq k<n\\ p\nmid k}}k.
$$
Moreover, set $\Gamma_p(0)=1$, and for any $p$-adic integer $x$ set
$$
\Gamma_p(x)=\lim_{n\to x}\Gamma_p(n),
$$
where $n$ runs through any sequence of positive integers $p$-adically approaching $x$.

Rodriguez-Villegas \cite{RVillegas03} investigated hypergeometric families of Calabi-Yau manifolds, and discovered (numerically) a number of possible supercongruences. Some of them have been proved in \cite{Mortenson03,Mortenson04} where Mortenson, with the help of the Gross-Koblitz formula,  determined ${}_{2}F_1\big[\genfrac{}{}{0pt}{}{\alpha}{}\genfrac{}{}{0pt}{}{1-\alpha}{1}\big|\,1\big]_{p-1}$ modulo $p^2$ for $\alpha\in\{1/2,1/3,1/4,1/6\}$. For instance, he showed that
\begin{equation}\label{RVcon}
{}_{2}F_1\bigg[\begin{matrix}\frac12,&\frac12\\ &1\end{matrix}\bigg|\,1\bigg]_{p-1}\equiv(-1)^{(p-1)/2}\pmod{p^2}
\end{equation}
for any prime $p\geq 5$. Using the Legendre relation of $p$-adic Gamma function (cf. \cite[p. 370]{R}), we may replace the right-hand side of \eqref{RVcon} with $-\Gamma_p(1/2)^2$. Later, Sun \cite{SunZH14} extended Mortenson's result to the general $p$-adic integer $\alpha$. Let $\Z_p$ denote the ring of all $p$-adic integers and $\Z_p^\times:=\{x\in\Z_p:\,p\nmid x\}$. Z.-H. Sun proved that for each odd prime $p$ and $\alpha\in\Z_p^\times$,
\begin{equation}\label{ZHSun}
{}_{2}F_1\bigg[\begin{matrix}\alpha,&1-\alpha\\ &1\end{matrix}\bigg|\,1\bigg]_{p-1}\equiv(-1)^{\langle-\alpha\rangle_p}\pmod{p^2},
\end{equation}
where $\langle x\rangle_p$ is the least nonnegative residue of $x$ modulo $p$, i.e., $0\leq \langle x\rangle_p\leq p-1$ and $x\equiv \langle x\rangle_p\pmod{p}$. On the other hand, Deines et al. \cite{DFLST16} obtained the following generalization of \eqref{RVcon}: for any integer $d>1$ and prime $p\eq1\pmod{d}$,
\begin{equation}\label{longcon}
{}_dF_{d-1}\bigg[\begin{matrix}1-\f1d,&1-\f1d,&\ldots,&1-\f1d\\ &1,&\ldots,&1\end{matrix}\bigg|\ 1\bigg]_{p-1}\eq-\Gamma_p\l(\f1d\r)^d\pmod{p^2}.
\end{equation}
In fact, Deines et al. also conjectured that for any integer $d\geq3$ and prime $p\eq1\pmod{d}$, the congruence \eqref{longcon} holds modulo $p^3$, and this conjecture was later confirmed by the third author and Pan \cite{WangPan}.

In the past decade, many mathematicians studied $q$-analogues of \eqref{RVcon} and its generalizations; among these $q$-congruences, the first one was obtained by Guo and Zeng \cite{GuoZeng}. Recently, via the so-called `creative microscoping' method introduced by Guo and Zudilin \cite{GuoZu}, Guo \cite{Guo2022} established a $q$-analogue of \eqref{longcon}. Meanwhile, Guo obtained a variant of \eqref{longcon} as follows: for any integer $d>1$ and prime $p\eq1\pmod{d}$,
$$
\sum_{k=0}^{p-1}\f{k(\f{d-1}{d})_k^d}{k!^d}\eq \f{(d-1)\Gamma_p(\f1d)^d}{2d}\pmod{p^2}.
$$

The main purpose of this paper is to prove the following variants of \eqref{longcon} which confirm two conjectures of Guo \cite[(5.4) and (5.5)]{Guo2022}.

\begin{theorem}\label{mainth1}
{\rm (i)} Let $d\geq4$ be an even integer. Then, for any prime $p\eq-1\pmod{d}$ with $p\geq 2d-1$,
\begin{equation}\label{mainth1eq1}
{}_dF_{d-1}\bigg[\begin{matrix}\f1d-1,&1+\f1d,&1+\f1d,&\ldots,&1+\f1d\\ &1,&1,&\ldots,&1\end{matrix}\bigg|\ 1\bigg]_{p-1}\eq\f{d-1}{d^2}\Gamma_p\l(-\f1d\r)^d\pmod{p^2}.
\end{equation}

{\rm (ii)} Let $d\geq3$ be an odd integer. Then, for any prime $p\eq-1\pmod{d}$,
\begin{equation}\label{mainth1eq2}
{}_dF_{d-1}\bigg[\begin{matrix}\f1d,&\f1d,&1+\f1d,&1+\f1d,&\ldots,&1+\f1d\\ &1,&1,&1,&\ldots,&1\end{matrix}\bigg|\ 1\bigg]_{p-1}\eq-\f{1}{d^2}\Gamma_p\l(-\f1d\r)^d\pmod{p^2}.
\end{equation}
\end{theorem}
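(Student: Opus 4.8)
The plan is to reduce both sums to a single weighted version of the block $(\tfrac1d)_k^d/k!^d$, to collapse them modulo $p^2$ to a short range using $p\equiv-1\pmod d$, and then to evaluate the resulting finite sum in closed form by the Karlsson--Minton summation formula. First I would record the two telescoping identities
$$
\l(1+\tfrac1d\r)_k=(1+dk)\l(\tfrac1d\r)_k,\qquad \l(\tfrac1d-1\r)_k=\frac{1-d}{\,dk-(d-1)\,}\l(\tfrac1d\r)_k,
$$
each obtained by telescoping the ratio of consecutive Pochhammer symbols. Substituting them turns the left-hand side of \eqref{mainth1eq2} into $\sum_{k=0}^{p-1}(1+dk)^{d-2}(\tfrac1d)_k^d/k!^d$ and the left-hand side of \eqref{mainth1eq1} into $(1-d)\sum_{k=0}^{p-1}\frac{(1+dk)^{d-1}}{dk-(d-1)}(\tfrac1d)_k^d/k!^d$. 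The two statements now differ only by their (polynomial, resp.\ rational) weight in $k$.

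For the truncation, set $t=(p+1)/d\in\Z$, so that $\tfrac1d\equiv t\pmod p$ and more precisely $\tfrac1d=t-\tfrac pd$. Then $(\tfrac1d)_k\equiv(t)_k\pmod p$ first becomes divisible by $p$ at $k=k^{*}:=(d-1)t=(d-1)(p+1)/d$, and acquires no further factor of $p$ for $k\le p-1$. Since the block $(\tfrac1d)_k^d$ is a $d$-th power, each term with $k\ge k^{*}$ has $p$-adic valuation at least $d-1\ge2$, so modulo $p^2$ both sums collapse to the range $0\le k\le k^{*}-1$. The hypothesis $p\ge2d-1$ in part (i) is exactly what guarantees $k^{*}\le p-1$ (so the collapse is to a proper subrange), and it is also what makes the boundary term $k=k^{*}$, at which the weight's denominator $dk-(d-1)$ vanishes $p$-adically, still have valuation $\ge d-1\ge2$; for odd $d$ in part (ii) the smallest admissible prime already exceeds $2d-1$, which is why no such hypothesis is needed there.

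The crux is to evaluate $\sum_{k=0}^{k^{*}-1}$ by the Karlsson--Minton formula. The difficulty is that the lower parameters of the given ${}_dF_{d-1}$ are all equal to $1$, so they do not differ from the upper parameters $1+\tfrac1d$ by a positive integer and the summation formula does not apply verbatim. My first move would be to pass to a \emph{terminating} hypergeometric series: reversing the order of summation produces a series with top parameter $-(k^{*}-1)$ and one lower parameter $-\tfrac1d-(k^{*}-1)$, which already exhibits the tell-tale $-\tfrac1d$ underlying the answer. I would then bring this into Karlsson--Minton form by a suitable transformation and apply the summation formula to collapse the series to a single closed product of Pochhammer symbols; the weight $(1+dk)^{j}=[(1+\tfrac1d)_k/(\tfrac1d)_k]^{j}$ governs how many numerator parameters sit at $1+\tfrac1d$ versus $\tfrac1d$, and assembling the closed forms while simplifying the rational combination is what produces the prefactors $\tfrac{d-1}{d^2}$ and $-\tfrac1{d^2}$.

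Finally I would pass to the $p$-adic Gamma function. Using $n!=(-1)^{n+1}\Gamma_p(n+1)$ for $0\le n<p$, the relation $(a)_k=(-1)^k\Gamma_p(a+k)/\Gamma_p(a)$ valid whenever no factor is divisible by $p$, and the functional and reflection equations of $\Gamma_p$, the closed product reduces to a rational multiple of $\Gamma_p(-\tfrac1d)^d$, matching the right-hand sides. I expect two main obstacles. The first is the recasting above: pinning down the precise transformation that lands the series in the scope of Karlsson--Minton, since neither the original series nor its naive reversal is literally of the required shape. The second is precision: the congruences are modulo $p^2$, not $p$, so throughout one must retain the linear correction $\tfrac1d=t-\tfrac pd$ inside every Pochhammer symbol and every $\Gamma_p$-argument, expand each factor to first order in $p$, and handle the boundary contributions in part (i) by hand.
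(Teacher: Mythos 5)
Your preliminary reductions are correct: the two Pochhammer rewritings $(1+\f1d)_k=(1+dk)(\f1d)_k$ and $(\f1d-1)_k=\f{1-d}{dk-(d-1)}(\f1d)_k$ do hold, and the observation that $(\f1d)_k$ first acquires a factor of $p$ at $k^*=(d-1)(p+1)/d$, so that (the block being a $d$-th power) all terms with $k\geq k^*$ vanish modulo $p^2$, is valid. But the heart of the argument --- the actual evaluation of the sum modulo $p^2$ --- is missing, and you flag it yourself as an unresolved ``obstacle.'' The route you sketch does not close this gap. Reversing the summation over $0\leq k\leq k^*-1$ converts the $d$ denominator blocks $(1)_k$ into upper parameters $-(k^*-1)$ and the upper parameters $\f1d$, $1+\f1d$ into lower parameters $1-\f1d-(k^*-1)$ and $-\f1d-(k^*-1)$; every upper-minus-lower difference is then an integer plus $\pm\f1d$, never a nonnegative integer, so no choice of ``suitable transformation'' of this reversed series meets the Karlsson--Minton hypothesis. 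More fundamentally, $\sum_{k=0}^{k^*-1}$ is a partial sum of a non-terminating series and has no reason to admit any closed form: Karlsson--Minton evaluates complete terminating series only. Finally, even granting a closed form for the ``main term,'' a congruence modulo $p^2$ requires the first-order corrections in $\f1d=m-\f{p}{d}$, which produce sums weighted by differences of harmonic-type sums $\sum_{j<k}\f1{m+j}-\sum_{j<k}\f1{m+1+j}$; these must themselves be evaluated modulo $p$, and your plan says nothing about how.

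The paper's mechanism is different at exactly these two points, and it is worth internalizing. With $m=(p+1)/d$, one embeds the series into a two-parameter family $\Phi(x,y)={}_dF_{d-1}[m-1+x,\,m+1+y,\ldots,m+1+y;\,1,\ldots,1\,|\,1]_{p-1}$, so the target is $\Phi(-p/d,-p/d)$, and one Taylor-expands modulo $p^2$ around $(0,0)$. The constant term is traded for $\Phi(-p,0)$: the shift $x=-p$ turns the first upper parameter into the negative integer $-(d-1)m$, so $\Phi(-p,0)$ is literally a Karlsson--Minton series (with $b_i=1$, $m_i=m$) and is evaluated exactly, then converted to $\Gamma_p(-\f1d)^d$ via the reflection and shift formulas. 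The derivative terms $\Phi_x(0,0)$, $\Phi_y(0,0)$ --- precisely the harmonic-sum weighted sums above --- are computed modulo $p$ by differentiating a second, auxiliary Karlsson--Minton identity for a ${}_{d+1}F_d$ carrying an extra upper parameter $1-p$ and perturbed lower parameters $1+x$, $1+y$ (Lemmas 3.1 and 3.2 of the paper). It is this perturbation-and-differentiation device, not any transformation of the original series, that brings the summation formula to bear; without it, or a substitute for it, your outline cannot be completed.
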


The second goal is to show another conjectural congruence posed by Guo \cite[Conjecture 1.3]{Guo2022}.

\begin{theorem}\label{mainth2}
Let $p\eq1\pmod4$ be a prime and $r\geq1$. Then
\begin{equation}\label{mainth2eq}
\sum_{k=0}^{p^r-1}\l(k-\f{p^{2r}-1}{4}\r)\f{(\f12)_k^2}{k!^2}\eq0\pmod{p^{2r+1}}.
\end{equation}
\end{theorem}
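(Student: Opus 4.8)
The plan is to avoid the truncated hypergeometric form altogether and reduce the whole statement to one congruence modulo the first power of $p$, after extracting an exact factor of $p^{2r}$ by telescoping. Throughout write $n=p^r$ and $c_k=\frac{(1/2)_k^2}{k!^2}=\binom{2k}{k}^2/16^k$, so that $c_{k+1}/c_k=(2k+1)^2/(2k+2)^2$. The engine is the elementary telescoping identity
\[
\left(k+\tfrac14\right)c_k=g_{k+1}-g_k,\qquad g_k:=k^2c_k,
\]
which is verified at once from the term ratio (it is the partial-summation counterpart of a Karlsson--Minton evaluation attached to the pair $5/4,1/4$, but needs only the ratio above). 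Summing over $0\le k\le n-1$ collapses the telescope to $\sum_{k=0}^{n-1}(k+\frac14)c_k=n^2c_n$.

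First I would split the weight as $k-\frac{p^{2r}-1}{4}=(k+\frac14)-\frac{p^{2r}}{4}$. Applying the telescoped sum to the first piece and writing $A:=\sum_{k=0}^{p^r-1}c_k$ for the second, the left-hand side of \eqref{mainth2eq} becomes the exact rational identity
\[
\sum_{k=0}^{p^r-1}\left(k-\frac{p^{2r}-1}{4}\right)c_k=p^{2r}c_{p^r}-\frac{p^{2r}}{4}A=\frac{p^{2r}}{4}\left(4c_{p^r}-A\right).
\]
Since $c_{p^r}$ and $A$ are $p$-adic integers and the rational number $p^{2r}/4$ has $p$-adic valuation exactly $2r$, the supercongruence \eqref{mainth2eq} reduces to the single congruence $4c_{p^r}\equiv A\pmod p$.

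The concluding step is this mod-$p$ evaluation, and it is where $p\equiv1\pmod4$ enters. Using $\frac{(1/2)_k}{k!}\equiv(-1)^k\binom{(p-1)/2}{k}\pmod p$ together with Kummer's theorem, I would first show that $c_k\equiv0\pmod p$ unless every base-$p$ digit of $k$ is at most $(p-1)/2$, and that for such $k=\sum_i k_ip^i$ one has the multiplicative reduction $c_k\equiv\prod_i c_{k_i}\pmod p$ (Lucas on $\binom{2k}{k}$, Fermat on $16^k$). Summing over all digit strings factorises the truncated sum as $A\equiv\bigl(\sum_{j=0}^{p-1}c_j\bigr)^r\pmod p$, while $\sum_{j=0}^{p-1}c_j\equiv\binom{p-1}{(p-1)/2}\equiv(-1)^{(p-1)/2}\equiv1\pmod p$ by Vandermonde and the hypothesis $p\equiv1\pmod4$; hence $A\equiv1\pmod p$. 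The same factorisation applied to $p^r$, whose only nonzero base-$p$ digit is a single $1$, gives $c_{p^r}\equiv c_1=\frac14\pmod p$, so $4c_{p^r}\equiv1\equiv A\pmod p$, completing the argument.

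I expect the main obstacle to be the passage from a single prime to an arbitrary prime power: the telescoping is uniform in $n$ and costs nothing, but the residual congruence must hold for $n=p^r$, and it is exactly the Lucas-type digit factorisation of $c_k$ that reduces it to the base-prime evaluation $\sum_{j=0}^{p-1}c_j\equiv1\pmod p$. It is worth flagging that the restriction $p\equiv1\pmod4$ is \emph{forced} here and nowhere else: for $p\equiv3\pmod4$ one would get $A\equiv(-1)^r$ while $4c_{p^r}\equiv1$, so the bracket $4c_{p^r}-A$ fails to be divisible by $p$ when $r$ is odd. Finally, although the Karlsson--Minton formula is the paper's headline tool, for this theorem the only summation actually required is the elementary telescoping above; the real content is that, once $p^{2r}$ has been split off, only a congruence modulo the first power of $p$ survives.
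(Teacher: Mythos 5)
Your proposal is correct, and its skeleton coincides with the paper's: the identity $\sum_{k=0}^{n-1}(k+\frac14)c_k=n^2c_n$ is exactly the paper's Lemma \ref{th2id} (stated there with the weight $4k+1$ and proved by induction rather than by exhibiting the telescoping antidifference $k^2c_k$, which is the cleaner route), and the evaluation $4c_{p^r}\equiv 1\pmod p$ via Lucas on $\binom{2p^r}{p^r}$ and Fermat on $16^{p^r}$ is the same computation as the paper's \eqref{flt}--\eqref{lt}. The one genuine divergence is the remaining ingredient $\sum_{k=0}^{p^r-1}c_k\equiv1\pmod p$: the paper imports this from Liu's supercongruence (Lemma \ref{Liu}, which even gives it modulo $p^2$ --- more than is needed), whereas you prove it from scratch by the Kummer/Lucas digit factorisation $c_k\equiv\prod_i c_{k_i}\pmod p$, which collapses the truncated sum to $\bigl(\sum_{j=0}^{p-1}c_j\bigr)^r$ and then to $\binom{p-1}{(p-1)/2}\equiv(-1)^{(p-1)/2}$ by Vandermonde. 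Your route buys a fully self-contained and elementary proof, makes transparent that only a first-power congruence survives after extracting $p^{2r}$, and isolates precisely where $p\equiv1\pmod4$ is used (your observation that the bracket $4c_{p^r}-A$ fails for $p\equiv3\pmod4$ and odd $r$ is a correct and worthwhile sanity check); the paper's route is shorter on the page at the cost of citing a nontrivial external result.
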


The rest of this paper is organized as follows. In the next section, we list some necessary lemmas which play key roles in the proof of Theorem \ref{mainth1}. Section 3 is devoted to the proof of Theorem \ref{mainth1}. In Section 4, we prove Theorem \ref{mainth2}. In Section 5, we shall pose a conjecture for further research.

\medskip

\section{Some necessary lemmas}
\setcounter{lemma}{0}
\setcounter{theorem}{0}
\setcounter{equation}{0}
\setcounter{conjecture}{0}
\setcounter{remark}{0}

The first key ingredient of our proofs is the following Karlsson-Minton summation formula (cf. \cite[p. 19]{GR}).

\begin{lemma}\label{kmiden}
Let $m_1,m_2,\ldots,m_n$ be nonnegative integers. Then
\begin{align}\label{kmidentype1}
&{}_{n+1}F_{n}\bigg[\begin{matrix} -(m_1+\cdots+m_n),&b_1+m_1,&\ldots,&b_n+m_n,\\ &b_1,&\ldots,&b_n\end{matrix}\bigg | \, 1\bigg]\notag
\\&\qquad=(-1)^{m_1+\cdots+m_n}\cdot\f{(m_1+\cdots+m_n)!}{(b_1)_{m_1}\cdots(b_n)_{m_n}}.
\end{align}

\end{lemma}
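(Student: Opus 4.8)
The plan is to note that the series terminates and then reduce it to a classical finite-difference evaluation. Write $M=m_1+\cdots+m_n$. Since $(-M)_k$ acquires the factor $-M+M=0$ once $k>M$, the series ${}_{n+1}F_n$ is genuinely the finite sum
\[
\sum_{k=0}^{M}\frac{(-M)_k}{k!}\prod_{j=1}^{n}\frac{(b_j+m_j)_k}{(b_j)_k}.
\]
The first step is the elementary Pochhammer ratio identity
\[
\frac{(b_j+m_j)_k}{(b_j)_k}=\frac{(b_j+k)_{m_j}}{(b_j)_{m_j}},
\]
which follows by cancelling the factors $b_j+m_j,\ldots,b_j+k-1$ common to numerator and denominator (or, uniformly, from the Gamma-function form of both sides). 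This lets me pull out the $k$-independent constant $\prod_{j}(b_j)_{m_j}$ and, using $(-M)_k/k!=(-1)^k\binom{M}{k}$, recast the entire sum as
\[
\frac{1}{(b_1)_{m_1}\cdots(b_n)_{m_n}}\sum_{k=0}^{M}(-1)^k\binom{M}{k}P(k),\qquad P(k):=\prod_{j=1}^{n}(b_j+k)_{m_j}.
\]

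The key observation is that $P(k)$ is a \emph{monic} polynomial in $k$ of degree $M$: each factor $(b_j+k)_{m_j}$ has degree $m_j$ with leading term $k^{m_j}$, so the product has degree $\sum_{j}m_j=M$ and leading coefficient $1$. I would then invoke the standard fact that the alternating binomial transform annihilates polynomials of degree below $M$ and extracts the leading coefficient at degree $M$. Concretely, expanding $P$ in the binomial basis $\{\binom{k}{i}\}_{i=0}^{M}$, in which the top coefficient equals $M!$ because $P$ is monic of degree $M$, and using $\sum_{k=0}^{M}(-1)^k\binom{M}{k}\binom{k}{i}=(-1)^M$ for $i=M$ and $=0$ for $0\le i<M$, one obtains
\[
\sum_{k=0}^{M}(-1)^k\binom{M}{k}P(k)=(-1)^M M!.
\]
Substituting this back yields exactly the claimed right-hand side $(-1)^M M!/\big((b_1)_{m_1}\cdots(b_n)_{m_n}\big)$.

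The only genuinely substantive point is the finite-difference evaluation in the last display; everything preceding it is routine bookkeeping. One may equivalently phrase it as $\Delta^M P(0)=M!$ for the $M$-th forward difference of a monic polynomial of degree $M$, and then reconcile the signs $(-1)^{M-k}$ and $(-1)^k$. The step most prone to error is exactly this sign reconciliation, together with confirming that every term of $P$ of degree below $M$ contributes nothing to the alternating sum; that is where I would be most careful.
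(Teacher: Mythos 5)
Your proof is correct. Note, though, that the paper does not prove this lemma at all: it simply quotes the Karlsson--Minton formula from Gasper and Rahman's book, so there is no argument in the paper to compare against. What you have supplied is a complete, elementary, self-contained derivation, and every step checks out: the termination at $k=M$ because of the factor $(-M)_k$, the transposition identity $(b_j+m_j)_k/(b_j)_k=(b_j+k)_{m_j}/(b_j)_{m_j}$ (which the Gamma-function form makes valid for all $k$, not just $k\ge m_j$), the identification $(-M)_k/k!=(-1)^k\binom{M}{k}$, the observation that $P(k)=\prod_j(b_j+k)_{m_j}$ is monic of degree $M$ in $k$, and the finite-difference evaluation $\sum_{k=0}^{M}(-1)^k\binom{M}{k}\binom{k}{i}=(-1)^M[i=M]$ together with the fact that the top coefficient of $P$ in the binomial basis is $M!$. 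This is essentially the classical proof of the Karlsson--Minton summation in the terminating case and is arguably preferable to a bare citation, since it makes transparent why the $b_j$ disappear from the answer except through the normalizing factor $(b_1)_{m_1}\cdots(b_n)_{m_n}$. The one point worth stating explicitly if you write this up is the standing hypothesis that the denominator parameters avoid the poles, i.e.\ that $(b_j)_{M}\neq0$ for each $j$, so that every term of the sum and the constant you pull out are well defined; this is implicit in the statement of the lemma but your cancellation step silently uses it.
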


Our proofs also rely on some properties of the $p$-adic Gamma functions.

\begin{lemma}\cite[p. 369]{R}\label{padicgammalem1}
Let $p$ be an odd prime and $x\in\Z_p$. Then
\begin{gather}
\label{Gammapxx1}\frac{\Gamma_p(x+1)}{\Gamma_p(x)}=\begin{cases}-x,&\text{if }p\nmid x,\\
-1,&\text{if }p\mid x,\end{cases}\\
\vspace{0.2mm}
\label{Gammapxx2}\Gamma_p(x)\Gamma_p(1-x)=(-1)^{\<-x\>_p-1}.
\end{gather}
\end{lemma}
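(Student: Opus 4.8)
The plan is to prove both identities first for positive integer arguments, where $\Gamma_p$ is an explicit finite product, and then to transport them to all $x\in\Z_p$ by continuity, exploiting that $\Gamma_p$ is continuous and nonvanishing and that the positive integers are dense in $\Z_p$. I take the existence and continuity of $\Gamma_p$ on $\Z_p$ (Morita's theorem) as given by the definition in the introduction. Within the proof I would establish \eqref{Gammapxx1} first and then use it to derive \eqref{Gammapxx2}.

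For \eqref{Gammapxx1} I would compare the defining products at consecutive integers: for a positive integer $n$,
\[
\f{\Gamma_p(n+1)}{\Gamma_p(n)}=-\,\f{\prod_{1\ls k\ls n,\,p\nmid k}k}{\prod_{1\ls k\ls n-1,\,p\nmid k}k},
\]
and the quotient of products equals $n$ if $p\nmid n$ and $1$ if $p\mid n$ (in the latter case $n$ itself is dropped from the top product). This gives the stated value on positive integers. To extend it, note that $\Z_p^\times=\{x\in\Z_p:\,p\nmid x\}$ and $p\Z_p=\{x\in\Z_p:\,p\mid x\}$ are clopen, that $x\mapsto\Gamma_p(x+1)/\Gamma_p(x)$ is continuous, and that the positive integers prime to $p$ (resp. divisible by $p$) are dense in $\Z_p^\times$ (resp. $p\Z_p$); continuity on each clopen piece then forces the formula everywhere.

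For \eqref{Gammapxx2} I would set $f(x)=\Gamma_p(x)\Gamma_p(1-x)$ and $g(x)=(-1)^{\<-x\>_p-1}$ and show both obey the same one-step recurrence. Writing $h(x)=-x$ if $p\nmid x$ and $h(x)=-1$ if $p\mid x$, formula \eqref{Gammapxx1} gives $\Gamma_p(x+1)=h(x)\Gamma_p(x)$ and $\Gamma_p(-x)=\Gamma_p(1-x)/h(-x)$, whence
\[
f(x+1)=\Gamma_p(x+1)\Gamma_p(-x)=\f{h(x)}{h(-x)}\,f(x),
\]
and a two-line case check yields $h(x)/h(-x)=-1$ for $p\nmid x$ and $=1$ for $p\mid x$. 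On the other side, $\<-x-1\>_p=\<-x\>_p-1$ when $p\nmid x$, while $\<-x-1\>_p=p-1$ when $p\mid x$, so—using that $p$ is odd—$g$ flips sign in the first case and is unchanged in the second. Thus $f$ and $g$ satisfy the identical recurrence. Since $\Gamma_p(0)=1$ and $\Gamma_p(1)=-1$ give $f(1)=-1=g(1)$ (as $\<-1\>_p=p-1$), induction yields $f=g$ on all positive integers; both functions are continuous on $\Z_p$ ($g$ is even locally constant, since $\<-x\>_p$ depends only on $x\bmod p$), so density of the positive integers finishes the proof.

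The product comparisons and the sign bookkeeping are routine; the one point deserving care is the continuity/density step. One must verify that each branch of the piecewise formulas is governed by a clopen subset of $\Z_p$, so that approximating a $p$-adic argument by integers keeps us within the correct branch. That is precisely what legitimizes passing from integer arguments to arbitrary $x\in\Z_p$, and it is the step I would justify most carefully.
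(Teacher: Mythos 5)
The paper gives no proof of this lemma at all—it is quoted directly from Robert's book \cite[p.~369]{R}—so there is no internal argument to compare against. Your derivation is correct and is essentially the standard one: both identities are verified on the positive integers straight from the defining product (the second via the matching one-step recurrences for $\Gamma_p(x)\Gamma_p(1-x)$ and $(-1)^{\langle -x\rangle_p-1}$ together with the base case $f(1)=g(1)=-1$), and then transported to all of $\Z_p$ by continuity and density, with the branch bookkeeping correctly handled by noting that $\Z_p^\times$ and $p\Z_p$ are clopen and that $\langle -x\rangle_p$ is locally constant; the only ingredient you take on faith is Morita's theorem that the limit defining $\Gamma_p$ exists and is continuous, which is exactly what the paper's own definition presupposes.
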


\begin{remark}
{\rm(a)} By \eqref{Gammapxx1}, it is easy to see that for any positive integer $n\leq p$,
\begin{equation}\label{Gammapxx1cor}
\Gamma_p(n)=(-1)^n\Gamma(n).
\end{equation}

{\rm(b)} The identity \eqref{Gammapxx2} is a $p$-adic analogue of the following Legendre relation of the classical Gamma function:
$$
\Gamma(x)\Gamma(1-x)=\f{\pi}{\sin\pi x}.
$$
\end{remark}

The next lemma concerns a $p$-adic approximation to $\Gamma_p$-quotients.

\begin{lemma}\cite[Theorem 14]{LoRa16}\label{padicgammalem2}
For any prime $p\geq5$ and $x\in\Z_p$, there exists $G_1(x)\in\Z_p$ such that for any $t\in\Z_p$,
\begin{equation}\label{padicgammalem2eq}
\Gamma_p(x+tp)\eq\Gamma_p(x)(1+G_1(x)tp)\pmod{p^2}.
\end{equation}
\end{lemma}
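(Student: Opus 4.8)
The assertion is a first-order Taylor expansion of Morita's $\Gamma_p$ along the residue disc $x+p\Z_p$, so my plan is to extract it from the local analyticity of $\Gamma_p$ and to identify $G_1(x)$ with the $p$-adic logarithmic derivative $\Gamma_p'(x)/\Gamma_p(x)$. I would first record two standing facts. (1) Every value $\Gamma_p(y)$ with $y\in\Z_p$ is a unit: for $y=n\in\N$ this is immediate from the product defining $\Gamma_p(n)$, and it extends to all $y$ by continuity because $\Z_p^\times$ is closed in $\Z_p$. (2) The function $\Gamma_p$ is locally analytic on $\Z_p$, as developed in Robert's book \cite{R}. Fact (1) makes division by $\Gamma_p(x)$ legitimate and shows that the $G_1(x)$ produced below is again a $p$-adic integer once $\Gamma_p'(x)\in\Z_p$ is known.

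Fixing $x\in\Z_p$, local analyticity lets me expand inside $x+p\Z_p$ as
$$\Gamma_p(x+h)=\sum_{n\geq0}\f{\Gamma_p^{(n)}(x)}{n!}h^n,\qquad h\in p\Z_p.$$
Substituting $h=tp$ with $t\in\Z_p$ turns the $n$-th term into $\f{\Gamma_p^{(n)}(x)}{n!}t^np^n$. The heart of the matter is that for $p\geq5$ each coefficient $\Gamma_p^{(n)}(x)/n!$ is $p$-integral enough that, after multiplication by $p^n$, every term with $n\geq2$ is $\eq0\pmod{p^2}$. Granting this, only the constant and linear parts survive, giving
$$\Gamma_p(x+tp)\eq\Gamma_p(x)+\Gamma_p'(x)\,tp\pmod{p^2},$$
and pulling out the unit $\Gamma_p(x)$ yields
$$\Gamma_p(x+tp)\eq\Gamma_p(x)\l(1+G_1(x)\,tp\r)\pmod{p^2},\qquad G_1(x)=\f{\Gamma_p'(x)}{\Gamma_p(x)}\in\Z_p,$$
which is exactly \eqref{padicgammalem2eq}.

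The genuine obstacle is the quantitative integrality of the local expansion, and this is precisely where the hypothesis $p\geq5$ is forced. Writing $\Gamma_p(x+h)=\Gamma_p(x)\exp\l(\psi_p(x)h+\f12\psi_p'(x)h^2+\cdots\r)$ with $\psi_p=\Gamma_p'/\Gamma_p$, the only borderline contributions come from the lowest-order terms, the critical one being quadratic, whose coefficient is governed by a $p$-adic trigamma value carrying a Bernoulli-type denominator divisible by $6$. For $p\geq5$ this denominator is a $p$-adic unit, so after the substitution $h=tp$ the degree-$2$ term really does acquire valuation $\geq2$; for $p\in\{2,3\}$ a factor of $p$ in the denominator can survive and spoil the clean congruence, which is why those primes are excluded. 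I would make this rigorous either (a) by quoting the explicit power-series coefficients of $\Gamma_p$ on $a+p\Z_p$ from \cite{R} and bounding their valuations, or (b) by an elementary route: prove the formula first for $x=a\in\N$ and $t\in\N$ by telescoping the functional equation \eqref{Gammapxx1}, which rewrites $\Gamma_p(a+tp)/\Gamma_p(a)$ as a signed product of the integers in $[a,a+tp)$ prime to $p$, expand that product modulo $p^2$ (here $p\geq5$ re-enters through Wolstenholme-type congruences for the harmonic sums that appear), and extend from this dense set to all $(x,t)\in\Z_p\times\Z_p$ by continuity, the delicate point being the uniform mod-$p^2$ control needed to pass to the limit.
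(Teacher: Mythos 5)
The paper offers no proof of this lemma: it is quoted verbatim from Long and Ramakrishna \cite[Theorem 14]{LoRa16}, so there is no in-paper argument to measure your proposal against. Judged on its own terms, your architecture is the right one and agrees with how the result is framed in the source: $\Gamma_p$ takes values in $\Z_p^\times$ (your density-plus-closedness argument is correct), it is locally analytic, $G_1(x)$ is the logarithmic derivative $\Gamma_p'(x)/\Gamma_p(x)$, and the congruence reduces to showing that after substituting $h=tp$ every Taylor term of order $n\geq 2$ has $p$-adic valuation at least $2$, for which the $p$-integrality of the coefficients $\Gamma_p^{(n)}(x)/n!$ suffices.

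The problem is that this integrality statement is not a technical footnote: it is essentially the entire content of the cited theorem, and your proposal asserts it ("granting this\dots") rather than proving it. The Bernoulli-denominator heuristic for why $p\geq5$ is needed is plausible but unverified, and the two exits you offer do not close the gap: option (a) merely replaces the citation of \cite{LoRa16} by a citation of \cite{R}, while option (b) — telescoping \eqref{Gammapxx1} to write $\Gamma_p(a+tp)/\Gamma_p(a)$ as a signed product over the integers in $[a,a+tp)$ prime to $p$, expanding block-by-block modulo $p^2$, and passing to the limit — is a genuinely workable elementary route, but the points you flag as delicate (the mod-$p^2$ expansion of each block, the precise place where $p\geq5$ and Wolstenholme-type congruences are invoked, and the uniformity needed to extend from $\N\times\N$ to $\Z_p\times\Z_p$) are exactly where all the work lies and are left undone. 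As written, this is a correct reduction of the lemma to its hardest ingredient, not a proof of it; for the purposes of this paper, simply citing \cite[Theorem 14]{LoRa16}, as the authors do, is the honest course.
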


For the properties of $G_1(x)$, the reader may consult \cite{PTW}. The following lemma lists two identities involving the derivatives of $(1+\alpha+x)_k$, which can be verified directly.

\begin{lemma}\label{lem4}
For any integer $k\geq 0$ and $\alpha,\beta\in\R$,
\begin{align*}
\frac{d}{d x}(1+\alpha+x)_k=&(1+\alpha+x)_k\sum_{j=1}^{k}\frac{1}{j+\alpha+x},\\
\frac{d}{d x}\bigg(\frac1{(1+\beta+x)_k}\bigg)=&-\frac1{(1+\beta+x)_k}\sum_{j=1}^{k}\frac{1}{j+\beta+x}.
\end{align*}
\end{lemma}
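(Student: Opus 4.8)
The plan is to prove both identities by logarithmic differentiation of the factored Pochhammer symbol, so no genuine machinery is needed. First I would write $(1+\alpha+x)_k=\prod_{j=1}^{k}(j+\alpha+x)$ straight from the definition, a product of $k$ affine factors in $x$. Applying the product rule gives
\[
\frac{d}{dx}\prod_{j=1}^{k}(j+\alpha+x)=\sum_{i=1}^{k}\prod_{\substack{1\le j\le k\\ j\ne i}}(j+\alpha+x);
\]
on the open dense set where no factor vanishes, each summand equals $(1+\alpha+x)_k/(i+\alpha+x)$, and factoring the full product out of the sum yields the first identity. For the second identity I would simply combine the first with the chain rule: setting $h(x)=(1+\beta+x)_k$, one has $\frac{d}{dx}(1/h)=-h'/h^{2}=-(1/h)(h'/h)$, and the first identity (with $\alpha$ replaced by $\beta$) gives $h'/h=\sum_{j=1}^{k}1/(j+\beta+x)$, whence the claimed formula. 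The degenerate case $k=0$ is trivial, both sides being $0$ (an empty product and an empty sum).

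The calculation is elementary, so I do not anticipate a real obstacle; the only subtlety worth a sentence is that the right-hand sides appear to have poles at $x=-j-\alpha$ (respectively $x=-j-\beta$). These are removable: multiplication by the Pochhammer product cancels each such denominator, so each side is genuinely a polynomial in $x$. The two polynomials agree on the dense set where all factors are nonzero, and hence for every real $x$ by continuity, which is exactly why the identities may be asserted directly. Equivalently, one reads both lines as equalities of rational functions, valid wherever the individual terms are defined.
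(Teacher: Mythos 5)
Your proof is correct and is precisely the direct verification the paper has in mind (the paper offers no written proof, stating only that the identities ``can be verified directly''). The product-rule computation on $(1+\alpha+x)_k=\prod_{j=1}^{k}(j+\alpha+x)$ together with the chain rule for the reciprocal is the standard argument, and your remark about the removable poles is a harmless extra precaution.
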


\medskip

\section{Proof of Theorem \ref{mainth1}}

Throughout this section, we set $m=(p+1)/d$. We first prove \eqref{mainth1eq1}. To show \eqref{mainth1eq1}, we need the following preliminary result.

\begin{lemma}\label{mainth1eq1lem}
Under the assumptions of Theorem \ref{mainth1} (i), modulo $p$, we have
$$
\sum_{k=0}^{p-1}\f{(m-1)_k(m+1)_k^{d-1}}{(1)_k^d}\l(\sum_{j=0}^{k-1}\f{1}{m-1+j}-\sum_{j=0}^{k-1}\f{1}{m+1+j}\r)\eq \f{(p-1)!}{(1)_{m-2}(1)_m^{d-1}}\l(\f{1}{m}+\f{1}{m-1}\r).
$$
\end{lemma}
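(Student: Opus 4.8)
The plan is to simplify the harmonic weight by telescoping, peel off an unweighted Karlsson--Minton sum, and show that the leftover piece vanishes modulo $p$ for degree reasons. Write $A_k=\f{(m-1)_k(m+1)_k^{d-1}}{(1)_k^d}$ and let $L$ denote the left-hand side. Reindexing the inner sums gives, for every $k\gs0$,
$$\sum_{j=0}^{k-1}\f1{m-1+j}-\sum_{j=0}^{k-1}\f1{m+1+j}=\sum_{i=m-1}^{m+k-2}\f1i-\sum_{i=m+1}^{m+k}\f1i=\l(\f1{m-1}+\f1m\r)-\l(\f1{m+k-1}+\f1{m+k}\r),$$
so that $L=\l(\f1{m-1}+\f1m\r)\Sigma_0-\Sigma_1$, where $\Sigma_0=\sum_{k=0}^{p-1}A_k$ and $\Sigma_1=\sum_{k=0}^{p-1}A_k\l(\f1{m+k-1}+\f1{m+k}\r)$.

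\emph{Evaluating $\Sigma_0$ via Karlsson--Minton.} Since $(t)_k\eq(t-p)_k\pmod p$, I would replace one factor $(m+1)_k$ in $\Sigma_0$ by $(m+1-p)_k=(-(p-m-1))_k$; the tail $p-m\ls k\ls p-1$ drops out because there $(m+1)_k\eq0\pmod p$. What remains is exactly the Karlsson--Minton series with $b_i=1$ and shifts $m-2$ (once) and $m$ ($d-2$ times), whose shift-sum $(m-2)+(d-2)m=p-m-1$ matches the negative entry. Lemma \ref{kmiden} then gives $\Sigma_0\eq(-1)^{p-m-1}\f{(p-m-1)!}{(1)_{m-2}(1)_m^{d-2}}\pmod p$, and using $(p-1)!\eq(-1)^m m!\,(p-m-1)!\pmod p$ together with $(-1)^{p-1}=1$ this becomes $\Sigma_0\eq\f{(p-1)!}{(1)_{m-2}(1)_m^{d-1}}\pmod p$, i.e.\ precisely the full right-hand side.

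\emph{Showing $\Sigma_1\eq0$.} From $\f{(m+1)_k}{k!}=\binom{m+k}{m}$ and $\f{(m-1)_k}{k!}=\binom{m-2+k}{m-2}$ one sees that $A_k=\binom{m-2+k}{m-2}\binom{m+k}{m}^{d-1}$ is a \emph{polynomial} in $k$ of degree $(m-2)+(d-1)m=dm-2=p-1$. As the polynomial $\binom{m+k}{m}$ contains both linear factors $(m+k)$ and $(m+k-1)$, each of $A_k/(m+k)$ and $A_k/(m+k-1)$ is again a polynomial in $k$, of degree $p-2$, with $p$-integral coefficients (here $m<p$ makes $m!$ a unit). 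Hence $\Sigma_1=\sum_{k=0}^{p-1}P(k)$ for a polynomial $P$ of degree $p-2$, and since $\sum_{k=0}^{p-1}k^j\eq0\pmod p$ for every $0\ls j\ls p-2$, I would conclude $\Sigma_1\eq0\pmod p$. Combining the three displays proves the lemma.

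\emph{Main obstacle.} The delicate point is $\Sigma_1$: its two constituent sums $\sum A_k/(m+k)$ and $\sum A_k/(m+k-1)$ are each ``one short of balanced'', so Lemma \ref{kmiden} does not apply to them directly; the key realization is instead the exact degree identity $\deg A_k=p-1$ (which hinges on $dm=p+1$), turning $\Sigma_1$ into a power-sum congruence. One should also note that at $k=p-m,\,p-m+1$ the naive summand has a spurious factor $1/p$, but this is harmless because $A_k$ carries a compensating $p^{d-1}$, and it disappears automatically once one works with the polynomial $P$.
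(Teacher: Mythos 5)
Your proof is correct, but it takes a genuinely different route from the paper's. The paper embeds the sum into a two-parameter deformation $\Psi(x,y)$ of a \emph{balanced} Karlsson--Minton series (adding the upper parameter $1-p$ and lower parameters $1+x,1+y$ so that Lemma \ref{kmiden} applies identically in $x,y$), evaluates it in closed form via \eqref{kmidentype1}, and then obtains both sides of the lemma simultaneously by computing $\Psi_x(0,0)-\Psi_y(0,0)$ with Lemma \ref{lem4}; the harmonic sums appear as logarithmic derivatives and the $H_k$ terms cancel in the difference. You instead telescope the harmonic difference into the constant $\tfrac1{m-1}+\tfrac1m$ minus the boundary terms $\tfrac1{m+k-1}+\tfrac1{m+k}$, evaluate the unweighted sum $\Sigma_0$ by the standard mod-$p$ replacement $(m+1)_k\to(m+1-p)_k$ followed by a direct application of Lemma \ref{kmiden} (the bookkeeping $(m-2)+(d-2)m=p-m-1$ and the Wilson-type reduction $(p-1)!\equiv(-1)^m m!\,(p-m-1)!$ are right), and kill the boundary contribution $\Sigma_1$ by the observation that $A_k/(m+k)$ and $A_k/(m+k-1)$ are $p$-integral polynomials in $k$ of degree $p-2$, so the power sums $\sum_{k=0}^{p-1}k^j$ with $j\le p-2$ vanish mod $p$. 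All the delicate points are handled: the exact degree identity $dm-2=p-1$, the $p$-integrality from $m<p$, and the apparent poles at $k=p-m,\,p-m+1$. What your approach buys is elementarity and an explicit value of $\Sigma_0$ mod $p$; what the paper's buys is uniformity --- no derivative needs to telescope to a constant, so the same computation transfers verbatim to Lemma \ref{mainth1eq2lem} and to situations where the two harmonic sums are not shifts of each other by an integer.
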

\begin{proof}
For $x,y\in(-1,+\infty)$, set
\begin{equation}\label{Psi1}
\Psi(x,y)={}_{d+1}F_d\bigg[\begin{matrix}1-p,&m-1+x,&m+1+y,&m+1,&\ldots,&m+1\\ &1+x,&1+y,&1,&\ldots,&1\end{matrix}\bigg|\ 1\bigg]_{p-1}.
\end{equation}
Clearly, $\Psi(x,y)$ is smooth on $(-1,+\infty)\times(-1,+\infty)$. Since $(1-p)_k=0$ for all $k\geq p$, we have
$$
\Psi(x,y)={}_{d+1}F_d\bigg[\begin{matrix}1-p,&m-1+x,&m+1+y,&m+1,&\ldots,&m+1\\ &1+x,&1+y,&1,&\ldots,&1\end{matrix}\bigg|\ 1\bigg].
$$
As
$$
m-2+(d-1)m=m-2+p+1-m=p-1,
$$
by Lemma \ref{kmiden},
\begin{equation}\label{Psi2}
\Psi(x,y)=\f{(p-1)!}{(1+x)_{m-2}(1+y)_m(1)_m^{d-2}}.
\end{equation}

Now we calculate $\Psi_x(0,0)-\Psi_y(0,0)$ in two different ways, where $\Psi_x(0,0)$ and $\Psi_y(0,0)$ stand for the partial derivatives of $\Psi$ at $(0,0)$ with respect to $x$ and $y$. By \eqref{Psi1} and Lemma \ref{lem4}, we obtain
\begin{align*}
\Psi_x(0,0)=&\sum_{k=0}^{p-1}\f{(1-p)_k(m-1)_k(m+1)_k^{d-1}}{(1)_k^{d+1}}\l(\sum_{j=0}^{k-1}\f1{m-1+j}-H_k\r),\\
\Psi_y(0,0)=&\sum_{k=0}^{p-1}\f{(1-p)_k(m-1)_k(m+1)_k^{d-1}}{(1)_k^{d+1}}\l(\sum_{j=0}^{k-1}\f1{m+1+j}-H_k\r),
\end{align*}
where $H_k=\sum_{j=1}^k1/j$ denotes the harmonic number. Therefore,
\begin{equation}\label{Psilkey1}
\Psi_x(0,0)-\Psi_y(0,0)=\sum_{k=0}^{p-1}\f{(1-p)_k(m-1)_k(m+1)_k^{d-1}}{(1)_k^{d+1}}\l(\sum_{j=0}^{k-1}\f1{m-1+j}-\sum_{j=0}^{k-1}\f1{m+1+j}\r).
\end{equation}
On the other hand, by \eqref{Psi2} and Lemma \ref{lem4},
\begin{equation}\label{Psilkey2}
\Psi_x(0,0)-\Psi_y(0,0)=\f{(p-1)!}{(1)_{m-2}(1)_m^{d-1}}\l(\f1m+\f1{m-1}\r).
\end{equation}
Note that for $k$ among $0,1,\ldots,p-1$, $(1-p)_k\eq(1)_k\pmod{p}$ and
$$
\f{(m-1)_k(m+1)_k^{d-1}}{(1)_k^{d+1}}\l(\sum_{j=0}^{k-1}\f1{m-1+j}-\sum_{j=0}^{k-1}\f1{m+1+j}\r)\in\Z_p.
$$
This, together with \eqref{Psilkey1} and \eqref{Psilkey2}, gives the desired result.
\end{proof}

\medskip

\noindent{\it Proof of \eqref{mainth1eq1}}. For any $x,y\in\Z_p$, let
$$
\Phi(x,y)={}_dF_{d-1}\bigg[\begin{matrix}m-1+x,&m+1+y,&\ldots,&m+1+y\\&1,&\ldots,&1\end{matrix}\bigg|\ 1\bigg]_{p-1}.
$$
Obviously, for any $s,t\in\Z_p$,
\begin{equation}\label{Phikey1}
\Phi(sp,tp)\eq\Phi(0,0)+sp\Phi_x(0,0)+tp\Phi_y(0,0)\pmod{p^2}.
\end{equation}
In particular,
\begin{equation}\label{Phikey2}
\Phi(-p,0)\eq\Phi(0,0)-p\Phi_x(0,0)\pmod{p^2}.
\end{equation}
Substituting \eqref{Phikey2} into \eqref{Phikey1}, we get
$$
\Phi(sp,tp)\eq\Phi(-p,0)+(s+1)p\Phi_x(0,0)+tp\Phi_y(0,0)\pmod{p^2}.
$$
Taking $s=t=-1/d$, in view of Lemmas \ref{lem4} and \ref{mainth1eq1lem},
\begin{align}\label{Phikey3}
&\Phi\l(-\f{p}d,-\f{p}d\r)\\
&\qquad={}_dF_{d-1}\bigg[\begin{matrix}\f1d-1,&1+\f1d,&1+\f1d,&\ldots,&1+\f1d\\ &1,&1,&\ldots,&1\end{matrix}\bigg|\ 1\bigg]_{p-1}\notag\\
&\qquad\eq\Phi(-p,0)+\l(1-\f1d\r)p\Phi_x(0,0)-\f{p}d\Phi_y(0,0)\notag\\
&\qquad\eq\Phi(-p,0)+\l(1-\f1d\r)p\sum_{k=0}^{p-1}\f{(m-1)_k(m+1)_k^{d-1}}{(1)_k^d}\l(\sum_{j=0}^{k-1}\f{1}{m-1+j}-\sum_{j=0}^{k-1}\f{1}{m+1+j}\r)\notag\\
&\qquad\eq\Phi(-p,0)+\l(1-\f1d\r)p\cdot\f{(p-1)!}{(1)_{m-2}(1)_m^{d-1}}\l(\f{1}{m}+\f{1}{m-1}\r)\pmod{p^2}.
\end{align}

Now we evaluate $\Phi(-p,0)$ modulo $p^2$. Since $m(d-1)=p+1-m$, by Lemma \ref{kmiden},
$$
\Phi(-p,0)=(-1)^m\f{(p+1-m)!}{(1)_m^{d-1}}.
$$
With the help of \eqref{Gammapxx1}, we obtain
\begin{align}\label{Phikey4}
\Phi(-p,0)=&(-1)^m\f{\Gamma(p-m+2)}{\Gamma(m+1)^{d-1}}=(-1)^{p+2-(m+1)(d-1)}\f{\Gamma_p(p-m+2)}{\Gamma_p(m+1)^{d-1}}\notag\\
=&(-1)^{1-(m+1)(d-1)}(p-m)(p-m+1)\f{\Gamma_p(p-m)}{\Gamma_p(m+1)^{d-1}}\notag\\
=&(-1)^{m}(p-m)(p-m+1)\f{\Gamma_p(p-m)}{\Gamma_p(m+1)^{d-1}},
\end{align}
where in the last step we have used the fact that $d$ is even. In light of \eqref{Gammapxx2} and Lemma \ref{padicgammalem2},
\begin{align}\label{Phikey5}
\f{\Gamma_p(p-m)}{\Gamma_p(m+1)^{d-1}}=&(-1)^{(m-1)(d-1)}\Gamma_p(p-m)\Gamma_p(-m)^{d-1}\notag\\
\eq&(-1)^{m-1}\Gamma_p\l(-\f1d\r)^d\l(1+\l(1-\f1d\r)pG_1\l(-\f1d\r)\r)\l(1-\f{p}dG_1\l(-\f1d\r)\r)^{d-1}\notag\\
\eq&(-1)^{m-1}\Gamma_p\l(-\f1d\r)^d\pmod{p^2}.
\end{align}
Moreover,
\begin{align*}
(p-m)(p-m+1)=&\l(-\f1d+\l(1-\f1d\r)p\r)\l(1-\f1d+\l(1-\f1d\r)p\r)\\
\eq&\f{1-d}{d^2}+\f{(d-1)(d-2)p}{d^2}\pmod{p^2}.
\end{align*}
Combining this with \eqref{Phikey4} and \eqref{Phikey5}, we obtain
\begin{equation}\label{maintheq1key1}
\Phi(-p,0)\eq\l(\f{d-1}{d^2}-\f{(d-1)(d-2)p}{d^2}\r)\Gamma_p\l(-\f1d\r)^d\pmod{p^2}.
\end{equation}
Similarly, it is routine to verify that
\begin{equation}\label{maintheq1key2}
\l(1-\f1d\r)p\cdot\f{(p-1)!}{(1)_{m-2}(1)_m^{d-1}}\l(\f{1}{m}+\f{1}{m-1}\r)\eq\f{(d-1)(d-2)p}{d^2}\Gamma_p\l(-\f1d\r)^d\pmod{p^2}.
\end{equation}

Substituting \eqref{maintheq1key1} and \eqref{maintheq1key2} into \eqref{Phikey3}, we immediately get \eqref{mainth1eq1}.\qed

\medskip

In order to show \eqref{mainth1eq2}, we also need an auxiliary lemma.

\begin{lemma}\label{mainth1eq2lem}
Under the assumptions of Theorem \ref{mainth1} (ii), we have
$$
\sum_{k=0}^{p-1}\f{(m)_k^2(m+1)_k^{d-2}}{(1)_k^d}\l(\sum_{j=0}^{k-1}\f{1}{m+j}-\sum_{j=0}^{k-1}\f{1}{m+1+j}\r)\eq \f{(p-1)!}{(1)_{m-1}(1)_m^{d-1}}\pmod{p}.
$$
\end{lemma}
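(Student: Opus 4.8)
The plan is to mimic the proof of Lemma \ref{mainth1eq1lem} with a suitably chosen two-variable deformation. For $x,y\in(-1,+\infty)$ I would set
$$
\Psi(x,y)={}_{d+1}F_d\bigg[\begin{matrix}1-p,&m+x,&m+1+y,&m,&m+1,&\ldots,&m+1\\ &1+x,&1+y,&1,&1,&\ldots,&1\end{matrix}\bigg|\ 1\bigg]_{p-1},
$$
where the numerator carries $d-3$ copies of $m+1$ (so that at $x=y=0$ the factor $(m)_k^2(m+1)_k^{d-2}$ is produced) and the denominator carries $d-2$ copies of $1$. Since $(1-p)_k=0$ for $k\geq p$, the truncation at $p-1$ may be dropped, so $\Psi$ is a genuine ${}_{d+1}F_d$ at argument $1$ and is smooth near $(0,0)$.

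Next I would evaluate $\Psi$ in closed form via the Karlsson--Minton formula (Lemma \ref{kmiden}). Pairing $m+x$ with $1+x$, $m+1+y$ with $1+y$, the lone $m$ with one copy of $1$, and each $m+1$ with a remaining $1$, the corresponding exponents are $m-1,\,m,\,m-1$ and $m$ (the last repeated $d-3$ times); their sum is $2(m-1)+m+(d-3)m=md-2=p-1$, matching the top entry $-(p-1)=1-p$. Because $(-1)^{p-1}=1$, Lemma \ref{kmiden} yields
$$
\Psi(x,y)=\f{(p-1)!}{(1+x)_{m-1}(1+y)_m(1)_{m-1}(1)_m^{d-3}}.
$$

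I would then compute $\Psi_x(0,0)-\Psi_y(0,0)$ in two ways. Differentiating the series and using Lemma \ref{lem4} (noting $\f{d}{dx}\big((m+x)_k/(1+x)_k\big)\big|_0=\f{(m)_k}{(1)_k}\big(\sum_{j=0}^{k-1}\f1{m+j}-H_k\big)$, and similarly in $y$), the harmonic numbers $H_k$ cancel in the difference, giving
$$
\Psi_x(0,0)-\Psi_y(0,0)=\sum_{k=0}^{p-1}\f{(1-p)_k(m)_k^2(m+1)_k^{d-2}}{(1)_k^{d+1}}\l(\sum_{j=0}^{k-1}\f1{m+j}-\sum_{j=0}^{k-1}\f1{m+1+j}\r).
$$
Differentiating the closed form instead, via the second identity of Lemma \ref{lem4}, gives $\Psi_x(0,0)=-H_{m-1}\Psi(0,0)$ and $\Psi_y(0,0)=-H_m\Psi(0,0)$, whence the difference equals $(H_m-H_{m-1})\Psi(0,0)=\Psi(0,0)/m$; using $m(1)_{m-1}=(1)_m$ this simplifies exactly to $(p-1)!/\big((1)_{m-1}(1)_m^{d-1}\big)$, the right-hand side of the lemma.

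Finally I would reduce modulo $p$. For $0\leq k\leq p-1$ one has $(1-p)_k\eq(1)_k\pmod p$ with $(1-p)_k/(1)_k$ a $p$-adic unit, so term-by-term
$$
\f{(1-p)_k(m)_k^2(m+1)_k^{d-2}}{(1)_k^{d+1}}(\cdots)\eq\f{(m)_k^2(m+1)_k^{d-2}}{(1)_k^{d}}(\cdots)\pmod p,
$$
which combined with the two evaluations above gives the claim. The one delicate point, exactly as in Lemma \ref{mainth1eq1lem}, is to confirm that each summand $\f{(m)_k^2(m+1)_k^{d-2}}{(1)_k^{d}}\big(\sum_{j=0}^{k-1}(m+j)^{-1}-\sum_{j=0}^{k-1}(m+1+j)^{-1}\big)$ lies in $\Z_p$: a harmonic term picks up a factor $p^{-1}$ only when some $m+j$ or $m+1+j$ equals $p$, and in precisely that range the square $(m)_k^2$ (respectively $(m+1)_k^{d-2}$, nonempty since $d\geq3$) already supplies a compensating factor $p$, so no genuine denominator survives. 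Verifying this $p$-integrality is the main thing to check carefully; everything else is the same bookkeeping as in the even case.
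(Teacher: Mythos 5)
Your proposal is correct and follows essentially the same route as the paper: the auxiliary function you call $\Psi(x,y)$ is exactly the paper's $\Upsilon(x,y)$ (up to reordering the numerator entries $m$ and $m+1+y$), and the paper likewise obtains the result by applying the Karlsson--Minton formula and computing $\Upsilon_x(0,0)-\Upsilon_y(0,0)$ in two ways, just as in Lemma \ref{mainth1eq1lem}. Your explicit verification of the $p$-integrality of the summands is a detail the paper leaves implicit, and it checks out.
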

\begin{proof}
For $x,y\in(-1,+\infty)$, set
\begin{align*}
\Upsilon(x,y)={}_{d+1}F_d\bigg[\begin{matrix}1-p,&m+x,&m,&m+1+y,&m+1,&\ldots,&m+1\\ &1+x,&1,&1+y,&1,&\ldots,&1\end{matrix}\bigg|\ 1\bigg]_{p-1}.
\end{align*}
Similarly as in the proof of Lemma \ref{mainth1eq1lem}, we are led to the desired result by considering $\Upsilon_x(0,0)-\Upsilon_y(0,0)$.
\end{proof}

\medskip

\noindent{\it Proof of \eqref{mainth1eq2}}. For any $x,y\in\Z_p$, set
$$
\Omega(x,y)={}_dF_{d-1}\bigg[\begin{matrix}m+x,&m+y,&m+1+y,&\ldots,&m+1+y\\&1,&1,&\ldots,&1\end{matrix}\bigg|\ 1\bigg]_{p-1}.
$$
Similarly as before, by Lemmas \ref{kmiden}, \ref{lem4} and \ref{mainth1eq2lem},
\begin{align}\label{Omegakey3}
&\Omega\l(-\f{p}d,-\f{p}d\r)\\
&\qquad={}_dF_{d-1}\bigg[\begin{matrix}\f1d,&\f1d,&1+\f1d,&1+\f1d,&\ldots,&1+\f1d\\ &1,&1,&1,&\ldots,&1\end{matrix}\bigg|\ 1\bigg]_{p-1}\notag\\
&\qquad\eq\Omega(-p,0)+\l(1-\f1d\r)p\Omega_x(0,0)-\f{p}d\Omega_y(0,0)\notag\\
&\qquad\eq\Omega(-p,0)+\l(1-\f2d\r)p\sum_{k=0}^{p-1}\f{(m)_k^2(m+1)_k^{d-2}}{(1)_k^d}\l(\sum_{j=0}^{k-1}\f{1}{m+j}-\sum_{j=0}^{k-1}\f{1}{m+1+j}\r)\notag\\
&\qquad\eq\Omega(-p,0)+\l(1-\f2d\r)p\cdot\f{(p-1)!}{(1)_{m-1}(1)_m^{d-1}}\pmod{p^2}.
\end{align}
Then, in view of Lemmas \ref{kmiden}--\ref{padicgammalem2}, we have
$$
\Omega(-p,0)\eq\l(-\f1{d^2}+\f{(d-2)p}{d^2}\r)\Gamma_p\l(-\f1d\r)^d\pmod{p^2}
$$
and
$$
\l(1-\f2d\r)p\cdot\f{(p-1)!}{(1)_{m-1}(1)_m^{d-1}}\eq\f{(2-d)p}{d^2}\cdot\Gamma_p\l(-\f1d\r)^d\pmod{p^2}.
$$

The proof of \eqref{mainth1eq2} follows by combining the above.\qed

\medskip

\section{Proof of Theorem \ref{mainth2}}

We need the following identity which can be verified by induction on $n$.
\begin{lemma}\label{th2id}
For any positive integer $n$, we have
$$
\sum_{k=0}^{n-1}(4k+1)\f{(\f12)_k^2}{k!^2}=\f{n^2}{4^{2n-1}}\binom{2n}{n}^2.
$$
\end{lemma}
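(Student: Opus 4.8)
The plan is to argue by induction on $n$, as the paper suggests. First I would rewrite the summand in a form that makes the arithmetic transparent. Using $(\f12)_k=(2k)!/(4^kk!)$, we have $\f{(\f12)_k^2}{k!^2}=\f1{16^k}\bi{2k}{k}^2$, so, writing $a_k=\f1{16^k}\bi{2k}{k}^2$, the claim becomes
$$
\sum_{k=0}^{n-1}(4k+1)a_k=\f{n^2}{4^{2n-1}}\bi{2n}{n}^2.
$$
This reformulation turns both the summand and the closed form into manifestly rational quantities, which is convenient for the inductive step.

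For the base case $n=1$ I would simply check that both sides equal $1$. For the inductive step, assuming the formula for $n$, I would add the single new term $(4n+1)a_n=(4n+1)\f1{16^n}\bi{2n}{n}^2$. Using $4^{2n-1}=16^n/4$, the running total collapses to
$$
\f{\bi{2n}{n}^2}{16^n}\l(4n^2+4n+1\r)=\f{(2n+1)^2}{16^n}\bi{2n}{n}^2,
$$
and it remains to verify that this coincides with the target value $\f{(n+1)^2}{4\cdot16^n}\bi{2n+2}{n+1}^2$ obtained by substituting $n+1$ into the right-hand side (here I use $4^{2n+1}=4\cdot16^n$).

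That final comparison is the only place any computation is needed, and it reduces to the elementary binomial relation $\bi{2n+2}{n+1}=\f{2(2n+1)}{n+1}\bi{2n}{n}$: squaring and substituting shows the target equals $\f{(2n+1)^2}{16^n}\bi{2n}{n}^2$ as well, which closes the induction. I do not expect a genuine obstacle here — the entire content is the observation that the increment of the proposed closed form from $n$ to $n+1$ equals the summand $(4n+1)a_n$, a one-line check once the powers of $4$ are tracked carefully. One could equally work directly with Pochhammer symbols via $(\f12)_n=(\f12)_{n-1}(2n-1)/2$, but recasting everything in terms of central binomial coefficients keeps the bookkeeping cleanest.
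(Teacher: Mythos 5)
Your proof is correct and follows exactly the route the paper indicates (induction on $n$); the paper itself only states that the identity ``can be verified by induction'' without giving details, and your computation with $a_k=\binom{2k}{k}^2/16^k$ and the relation $\binom{2n+2}{n+1}=\frac{2(2n+1)}{n+1}\binom{2n}{n}$ fills in that verification correctly.
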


Assuming $p\eq1\pmod{4}$ and putting $x=1/2$ in \cite[Corollary 1.4]{Liu2017}, we have the following result.
\begin{lemma}\label{Liu}
Let $p\eq1\pmod4$ be a prime and $r$ a positive integer. Then
$$
\sum_{k=0}^{p^r-1}\f{(\f12)_k^2}{k!^2}\eq1\pmod{p^2}.
$$
\end{lemma}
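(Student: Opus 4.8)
The plan is to read the stated congruence as the value at $\al=1/2$ of the prime-power refinement of Z.-H. Sun's congruence \eqref{ZHSun}, and to reach it by induction on $r$. Writing $\f{(\f12)_k^2}{k!^2}=\f{1}{16^k}\bi{2k}{k}^2$ and setting $S_r=\sum_{k=0}^{p^r-1}\f1{16^k}\bi{2k}{k}^2$, the goal is $S_r\eq1\pmod{p^2}$. The base case $r=1$ is immediate from \eqref{ZHSun} (equivalently \eqref{RVcon}) with $\al=1/2$: here $\<-1/2\>_p=(p-1)/2$, which is even because $p\eq1\pmod4$, so $S_1\eq(-1)^{(p-1)/2}=1\pmod{p^2}$.

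For the inductive step I would show $S_r\eq S_{r-1}\pmod{p^2}$, i.e.\ that the tail $\sum_{p^{r-1}\ls k<p^r}\f1{16^k}\bi{2k}{k}^2$ vanishes modulo $p^2$. The first reduction is purely $p$-adic valuation: by Kummer's theorem $v_p\bi{2k}{k}$ equals the number of carries in the base-$p$ addition $k+k$, so $v_p\l(16^{-k}\bi{2k}{k}^2\r)\gs2$ whenever $k+k$ produces at least one carry. All such terms are individually $\eq0\pmod{p^2}$ and may be discarded; what remains is the sum over the \emph{carry-free} indices $k\in[p^{r-1},p^r)$, namely those whose base-$p$ digits are all at most $(p-1)/2$ (with nonzero leading digit). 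For these the summand is a $p$-adic unit.

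The main obstacle is therefore to produce genuine cancellation among these carry-free unit terms, since no term-by-term estimate can help. This is exactly the degenerate, singular-fibre ($z=1$) face of Dwork's congruences for ${}_2F_1\big[\f12,\f12;1;z\big]$, where the standard Dwork recursion breaks down. I would attack it by factoring each carry-free $k$ through its digits, writing $k=i+p^{r-1}j$ with $0\ls i<p^{r-1}$ carry-free and $1\ls j\ls(p-1)/2$, and expanding the resulting $\Gamma_p$-quotients to second order in $p$ by means of \eqref{Gammapxx1}, \eqref{Gammapxx2} and Lemma \ref{padicgammalem2}; a Jacobsthal--Kazandzidis type (mod $p^2$) factorisation of $\bi{2k}{k}$ over the digits then reorganises the carry-free tail into blocks that cancel modulo $p^2$. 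This second-order digit expansion is the sole step requiring real work, and it is precisely the content packaged in \cite[Corollary 1.4]{Liu2017}; once that corollary is invoked the induction collapses, and one obtains the claim simply by substituting $x=1/2$ and noting that $\<-1/2\>_{p^r}=(p^r-1)/2$ is even for $p\eq1\pmod4$, so that the right-hand side equals $(-1)^{(p^r-1)/2}=1$.
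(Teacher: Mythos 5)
Your proposal, once stripped of its scaffolding, ends exactly where the paper's proof begins and ends: invoking \cite[Corollary 1.4]{Liu2017} at $x=1/2$ and noting that $\langle -1/2\rangle_{p^r}=(p^r-1)/2$ is even for $p\equiv1\pmod4$ — that citation is the paper's entire proof. Your preliminary reductions (the rewriting $(\tfrac12)_k^2/k!^2=16^{-k}\binom{2k}{k}^2$, the base case via \eqref{ZHSun}, and the Kummer-theorem elimination of carried indices) are correct but become superfluous the moment the corollary is cited, so the proposal is correct and takes essentially the same route as the paper.
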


\medskip

\noindent{\it Proof of Theorem \ref{mainth2}}. Equivalently, we only need to show
\begin{equation}\label{mainth2key}
\sum_{k=0}^{p^r-1}(4k+1)\f{(\f12)_k^2}{k!^2}\eq p^{2r}\sum_{k=0}^{p^r-1}\f{(\f12)_k^2}{k!^2}\pmod{p^{2r+1}}.
\end{equation}
Putting $n=p^{2r}$ in Lemma \ref{th2id}, we obtain
\begin{equation}\label{lhs}
\sum_{k=0}^{p^r-1}(4k+1)\f{(\f12)_k^2}{k!^2}=\f{p^{2r}}{4^{2p^r-1}}\binom{2p^r}{p^r}^2.
\end{equation}
By Fermat's little theorem,
\begin{equation}\label{flt}
4^{2p^r-1}=4\times16^{p^r-1}=4\times(16^{p-1})^{\f{p^r-1}{p-1}}\eq4\pmod{p}.
\end{equation}
Using Lucas's theorem, we have
\begin{equation}\label{lt}
\binom{2p^r}{p^r}\eq\binom{2p^{r-1}}{p^{r-1}}\eq\cdots\eq\binom{2}{1}=2\pmod{p}.
\end{equation}
Substituting \eqref{flt} and \eqref{lt} into \eqref{lhs}, the left-hand side of \eqref{mainth2key} becomes $p^{2r}$ modulo $p^{2r+1}$. In view of Lemma \ref{Liu}, we arrive at Theorem \ref{mainth2} at once.\qed

\medskip
\section{Concluding remarks}

It is routine to check that
\begin{align}\label{combin}
&{}_dF_{d-1}\bigg[\begin{matrix}\f1d-1,&1+\f1d,&\ldots,&1+\f1d\\ &1,&\ldots,&1\end{matrix}\bigg|\ 1\bigg]_{p-1}+(d-1){}_dF_{d-1}\bigg[\begin{matrix}\f1d,&\f1d,&1+\f1d,&\ldots,&1+\f1d\\ &1,&1,&\ldots,&1\end{matrix}\bigg|\ 1\bigg]_{p-1}\notag\\
&\qquad=d\cdot{}_dF_{d-1}\bigg[\begin{matrix}\f1d-1,&\f1d,&1+\f1d,&1+\f1d,&\ldots,&1+\f1d\\ &1,&1,&1,&\ldots,&1\end{matrix}\bigg|\ 1\bigg]_{p-1}.
\end{align}
Note that Guo \cite[Corollaries 4.2 and 4.4]{Guo2022} proved that \eqref{mainth1eq1} and \eqref{mainth1eq2} also hold for odd integers $d\geq3$ and even integers $d\geq4$, respectively. This, together with \eqref{mainth1eq1}, \eqref{mainth1eq2} and \eqref{combin}, gives that
\begin{equation}\label{combincon}
{}_dF_{d-1}\bigg[\begin{matrix}\f1d-1,&\f1d,&1+\f1d,&1+\f1d,&\ldots,&1+\f1d\\ &1,&1,&1,&\ldots,&1\end{matrix}\bigg|\ 1\bigg]_{p-1}\eq0\pmod{p^2}
\end{equation}
for any integer $d\geq3$ and prime $p\eq-1\pmod{d}$ with $p\neq d-1$. In fact, \eqref{combincon} can also be proved independently by using the method we used to prove Theorem \ref{mainth1} and the following Karlsson-Minton summation formula:
$$
{}_{n+1}F_n\bigg[\begin{matrix}a,&b_1+m_1,&\ldots,&b_n+m_n\\&b_1,&\ldots,&b_n\end{matrix}\bigg|\ 1\bigg]=0
$$
provided that $m_1,\ldots,m_n$ are nonnegative integers and $\Re(-a)>m_1+\cdots+m_n$.

Motivated by \eqref{combincon} and based on some numerical calculations, we made the following conjecture for further study.

\begin{conjecture}\label{conj1}
Let $d\geq2$ be an integer. Let $n$ be a positive integer with $n\eq-1\pmod{d}$ and $n>d-1$. Then
\begin{equation}\label{conj1eq}
\f{(n-1)!^dd^{dn-d}}{n^2}\cdot {}_dF_{d-1}\bigg[\begin{matrix}\f1d-1,&\f1d,&1+\f1d,&1+\f1d,&\ldots,&1+\f1d\\ &1,&1,&1,&\ldots,&1\end{matrix}\bigg|\ 1\bigg]_{n-1}\in\Z.
\end{equation}
\end{conjecture}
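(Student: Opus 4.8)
The plan is to first recast \eqref{conj1eq} as an elementary divisibility, and then attack it prime by prime. Write $m=(n+1)/d\in\Z$ (legitimate since $n\eq-1\pmod d$) and expand the truncated series. Each numerator Pochhammer symbol factors as $(\f1d+c)_k=d^{-k}\prod_{i=c}^{c+k-1}(1+di)$, so the product of the $d$ upper parameters equals $d^{-dk}$ times an integer $P_k$ built from the arithmetic progression $1+di$. Consequently, writing $S$ for the series in \eqref{conj1eq},
$$
M:=(n-1)!^d\,d^{dn-d}\cdot S=\sum_{k=0}^{n-1}\f{(n-1)!^d}{k!^d}\,d^{d(n-1-k)}P_k
$$
is an integer, because $(n-1)!/k!\in\Z$ and $n-1-k\gs0$ for $0\ls k\ls n-1$. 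Thus \eqref{conj1eq} is equivalent to the single assertion $n^2\mid M$.

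Next I would localize. Since $n\eq-1\pmod d$ forces $\gcd(n,d)=1$, every prime $q$ dividing $n$ is coprime to $d$, so at such $q$ the factor $d^{d(n-1-k)}$ is a unit and plays no role. As $n^2\mid M$ is equivalent to $v_q(M)\gs 2\,v_q(n)$ for every prime $q\mid n$, the whole problem reduces to a local estimate: fixing $q\mid n$ and $a=v_q(n)$, show that in the integer sum $M=\sum_k c_k$ the terms $c_k=(n-1)!^d k!^{-d}d^{d(n-1-k)}P_k$ cancel to order at least $q^{2a}$.

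To generate this cancellation I would imitate the deformation argument of Section 3, but carried out entirely through exact rational identities rather than $p$-adic analysis. Setting $\Phi(x,y)={}_dF_{d-1}[m-1+x,m+y,m+1+y,\dots,m+1+y;1,\dots,1]_{n-1}$, the series $S$ is exactly $\Phi(-n/d,-n/d)$; adjoining the terminating factor $(1-n)_k$ together with two auxiliary parameters turns the associated ${}_{d+1}F_d$ into a Karlsson-Minton sum, which Lemma \ref{kmiden} (and, for the vanishing responsible for the $\eq0$ in \eqref{combincon}, the companion summation formula displayed above) evaluates in closed form. Differentiating in the two parameters and specializing, exactly as in Lemmas \ref{mainth1eq1lem} and \ref{mainth1eq2lem}, produces exact closed forms for the harmonic-weighted sums such as $\Phi_x(0,0)-\Phi_y(0,0)$. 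Since $\Phi$ is a polynomial in $(x,y)$, a finite Taylor expansion of $\Phi(-n/d,-n/d)$ about a Karlsson-Minton-evaluable base point then collapses $M$ into a handful of explicit factorial-and-harmonic expressions.

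The main obstacle will be the closing valuation bookkeeping. In the prime case the closed forms are $p$-adic Gamma quotients and the upgrade to modulus $p^2$ is supplied by Lemma \ref{padicgammalem2}; for composite $n$ these tools are simply unavailable, and one must instead prove by hand that the elementary pieces---factorials like $(n+1-m)!/(1)_m^{d-1}$ weighted by $(n-1)!^d d^{dn-d}$, together with the first-order harmonic corrections---combine to a multiple of $q^{2a}$ at every prime $q\mid n$ simultaneously. Concretely, this amounts to a Kummer/Legendre-style count pinning down the unique singular index $i\eq-m\pmod{q^a}$ at which a progression factor $1+di$ becomes divisible by $q$ (here $d^{-1}\eq m\pmod n$ because $dm=n+1$), together with a proof that the harmonic corrections cancel modulo $n$ across the sum. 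Establishing this cancellation uniformly in $q$, without recourse to the $p$-adic Gamma function, is precisely the step that separates the conjecture from the already-settled prime case \eqref{combincon}.
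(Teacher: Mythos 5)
This statement is left as an open problem in the paper: Conjecture \ref{conj1} carries no proof there, and the authors only remark that the `creative microscoping' method might eventually settle it. So the question is whether your proposal actually closes it, and it does not. Your opening reduction is sound and worth keeping: writing each upper Pochhammer symbol as $d^{-k}$ times a product over the progression $1+di$ shows that $M=(n-1)!^d d^{dn-d}S$ is an integer, and since $n=dm-1$ forces $\gcd(n,d)=1$, the claim is indeed equivalent to $v_q(M)\gs 2v_q(n)$ for every prime $q\mid n$. That is a correct and useful reformulation.

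The gap is everything after that, and you acknowledge it yourself: the local cancellation to order $q^{2a}$ is asserted as the ``main obstacle'' rather than proved. The difficulty is not mere bookkeeping. Every step of the Section 3 argument you propose to imitate uses $p$-adic input that has no analogue for composite $n$: the congruence $(1-n)_k\eq(1)_k\pmod{n}$ already fails to hold modulo $n^2$ and even modulo $n$ requires care once $k$ exceeds a prime factor of $n$; the two-variable expansion $\Phi(sn,tn)\eq\Phi(0,0)+sn\Phi_x(0,0)+tn\Phi_y(0,0)\pmod{n^2}$ is not automatic because the coefficients of $\Phi$ as a polynomial in $(x,y)$ have denominators $k!^d$ sharing factors with $n$, so the quadratic remainder need not be divisible by $n^2$; and the final comparison of the Karlsson--Minton closed forms, which in the prime case is carried by Lemmas \ref{padicgammalem1} and \ref{padicgammalem2} via $\Gamma_p$-quotients, has no elementary substitute supplied here. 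Your Kummer/Legendre-style counting of the singular index $i\eq-m\pmod{q^a}$ is a plausible starting point for such a substitute, but as written it is a program, not an argument, so the conjecture remains open after your proposal.
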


We think it is possible that the `creative microscoping' could be used to prove Conjecture \ref{conj1}. We hope that an interested reader will make some progress on it.

\medskip

\begin{Acks}
We are very grateful to the anonymous referee for his/her valuable suggestions. This work is supported by the National Natural Science Foundation of China (grant no. 12201301) and the Jiangsu Province Student Innovation Training Program (grant no. 202210298128Y).
\end{Acks}

\noindent{\bf Declarations}. No potential conflict of interest was reported by the author. No availability of data and material. No code availability.

\end{document}